\newcommand{\eg}{{\it e.g.}}
\newcommand{\ie}{{\it i.e.}}
\newglossaryentry{IPG}
{
  name={IPG},
  description={Integer Programming Game},
  first={Integer Programming Game (\glsentrytext{IPG})},
  plural={IPGs},
  descriptionplural={Integer Programming Game},
  firstplural={Integer Programming Games (\glsentryplural{IPG})}
}
\newglossaryentry{PWL}
{
  name={PWL},
  description={piecewise linear},
  first={piecewise linear (\glsentrytext{PWL})},
  plural={PWLs},
  descriptionplural={Piecewise },
  firstplural={piecewise linears (\glsentryplural{PWL})}
}
\newacronym{GCI}{CIG}{Cybersecurity Investment Game} %
\newacronym{MINLP}{MINLP}{Mixed-integer Nonlinear Program} %
\newacronym{MILP}{MILP}{Mixed-integer Linear Program} %
\newacronym{MIQCQP}{MIQCQP}{Mixed-integer Quadratically Constrained Quadratic Program} %
\newacronym{SGM}{SGM}{Sample Generation Method} %
\theoremstyle{plain}
\newtheorem{definition}{Definition}
\newtheorem{thm}{Theorem}
\newtheorem{prop}[thm]{Proposition}
\theoremstyle{definition}
\crefname{lemma}{Lemma}{Lemmata}
\crefname{theorem}{Theorem}{Theorems}
\crefname{claim}{Claim}{Claims}
\crefname{proposition}{Proposition}{Propositions}
\crefname{prop}{Proposition}{Propositions}
\crefname{algorithm}{Algorithm}{Algorithms}
\crefname{equation}{}{}
\crefname{definition}{Definition}{Definition}
\crefname{Cla}{Claim}{Claim}
\crefname{corollary}{Corollary}{Corollaries}
\crefname{remark}{Remark}{Remarks}
\crefname{example}{Example}{Examples}
\crefname{figure}{Figure}{Figures}
\crefname{section}{Section}{Sections}
\crefname{table}{Table}{Tables}
\crefname{enumi}{Statement}{Statements}
\crefname{line}{Step}{Steps}
\newcommand{\fabsapprox}{\text{-absolute approximation }}
\newcommand{\fabsapproxs}{\text{-absolute approximations }}
\newcommand{\theTitle}{Computing Approximate Nash Equilibria for Integer Programming Games}
\newcommand{\theAbstract}{
We propose a framework to compute approximate Nash equilibria in integer programming games with nonlinear payoffs, \ie, simultaneous and non-cooperative games where each player solves a parametrized mixed-integer nonlinear program. 
We prove that using absolute approximations of the players' objective functions and then computing its Nash equilibria is equivalent to computing approximate Nash equilibria where the approximation factor is doubled.
In practice, we propose an algorithm to approximate the players' objective functions via piecewise linear approximations. Our numerical experiments on a cybersecurity investment game show the computational effectiveness of our approach.
}
    \definecolor{lime}{HTML}{A6CE39}
    \DeclareRobustCommand{\orcidicon}{
    	\begin{tikzpicture} \draw[lime, fill=lime] (0,0) circle [radius=0.16] node[white] { {\fontfamily{qag}\selectfont \tiny ID} };
    	\draw[white, fill=white] (-0.0625,0.095) circle [radius=0.007];
    	\end{tikzpicture} \hspace{-2mm}
    }
\begin{document}

    \title{\theTitle}
    \author{Aloïs Duguet \orcidA{} Margarida Carvalho \orcidB{} Gabriele Dragotto \orcidC{} \\ Sandra Ulrich Ngueveu \orcidD{}} 
    \date{}

\maketitle
\begin{abstract}\theAbstract\end{abstract}

\section{Introduction} 
\label{section_introduction}

In this paper, we focus on the task of computing Nash equilibria for \glspl{IPG} \citep{Koppe11,IPGs_2023_Tutorial}, a broad class of \emph{simultaneous} and \emph{non-cooperative} games.  In simple terms, an \gls{IPG} is a game among a finite number of players, each of which decides by solving a mixed-integer optimization problem. Compared to more classic game representations, such as normal or extensive form games, \glspl{IPG} can \emph{implicitly} describe the space of strategies (\ie, feasible points) of each player through a mixed-integer programming set. Thus, \glspl{IPG} avoid the possibly expensive \emph{explicit} enumeration of all the players' strategies; this is especially important when the number of strategies available to each player is large or even uncountable, for instance, in a combinatorial setting. We formally define \glspl{IPG} in \cref{def:IPG}.
\begin{definition}[\gls{IPG}]
An \gls{IPG} is a simultaneous and non-cooperative game with complete information among a finite set $M=\{1,2...,m\}$ of players such that each player $p \in M$ solves the parametric optimization problem
\begin{subequations}
\begin{align}
    \max_{x^p} \quad &\Pi^p(x^p;x^{-p}) \\
    \text{s.t.} \quad &x^p \in X^p:= \{A^p x^p \leq b^p, x^p \in \mathbb{R}^{n_p^c} \times \mathbb{Z}^{n_p^i} \},
\end{align}
\label{Player_p_problem}
\end{subequations}
where $x^{-p}=(x^1,\dots,x^{p-1},x^{p+1},\dots,x^m)$ is the vector of strategies for all players except $p$, $X^p$ and $\Pi^p$ are the \emph{strategy set} and the \emph{payoff function} of player $p$, and $A^p$ and $b^p$ are a rational matrix and vector of appropriate dimensions, respectively.
\label{def:IPG}
\end{definition}

When we say an \gls{IPG} is a non-cooperative complete-information game, we mean that each player maximizes its payoff and has full information on the other players' optimization problems, namely, on the objective function and constraints of its opponents.
Similarly to other classes of simultaneous games, the leading solution concept for \glspl{IPG} is the so-called Nash equilibrium. Intuitively, a Nash equilibrium is a \emph{stable} solution where no single player has an incentive to profitably defect from the solution. In recent years, several authors proposed a variety of algorithms to compute Nash equilibria in \glspl{IPG} \citep{Carvalho21,Carvalho22,Cronert22, Dragotto_2021_ZERORegrets,sagratella_computing_2016,schwarze_branch-and-prune_2022}. The majority of these algorithms assume the payoff functions to be linear or linear-quadratic, and often involve the solution of a so-called \emph{best response} program, \ie, the solution of the optimization problem of player $p$~\eqref{Player_p_problem} given a fixed set of other players' strategies $x^{-p}$. As motivated in the tutorial \citep{IPGs_2023_Tutorial}, solving the best response program is pivotal for the correct identification and computation of an equilibrium strategy. However, from a computational perspective, the form of the payoff function $\Pi^p$ intrinsically influences the difficulty of solving the best-response program. Whenever $\Pi^p$ is not linear in $x^p$, the resulting best-response program is a \gls{MINLP}, a well-known class of difficult nonconvex optimization problems \citep{Belotti13}. 
In general, there are different ways to handle nonlinear terms,  \eg, convex relaxations combined with branch-and-bound algorithms \citep{Gounaris08a,Gounaris08b,Liberti04}, or piecewise linear approximations of the nonlinear terms~\citep{Geissler12,Zhang08}. However, to date, the computation of Nash equilibria in \glspl{IPG} with nonlinear utilities (in each player's variables) is a rather unexplored topic, most likely because of the difficulty associated with, on the one hand, computing equilibria, and, on the other hand, handling nonlinearities in a computationally-efficient way.

\paragraph{Contributions. } 
In this paper, we specifically focus on \glspl{IPG} with nonlinear payoffs. We summarize our contributions as follows:
\begin{itemize}
    \item We provide a general methodology to compute Nash equilibria for \glspl{IPG} where players have nonlinear payoff functions in their variables. We prove that performing an approximation with pointwise guarantees on the payoff functions is equivalent to computing an approximate Nash equilibrium.
    \item We propose a piecewise-linear approximation scheme to enable \gls{SGM} from
    \citep{Carvalho22} to compute approximate Nash equilibria in \glspl{IPG} with nonlinear payoffs. Specifically, we approximate the inherently nonlinear players' best-response programs with piecewise-linear approximations.
    \item Finally, we demonstrate the effectiveness of our framework via computational experiments on a cybersecurity investment game. 
\end{itemize}

\paragraph{Outline.} We organize the paper as follows.  \cref{section_background} reviews the current literature and provides some background definition. \cref{section_methodology} presents  our main theorem and the approach to compute approximate Nash equilibria. \cref{section_application} introduces a game-theory model for cybersecurity investments, and the computational results. We propose our conclusions in \cref{section_conclusion}.

\section{Background} \label{section_background}

\paragraph{Approximations.}
Solving \glspl{MINLP} can be a computationally-challenging task, due to the nonlinearities (and nonconvexities) involved in the formulations. A common approach to overcome these challenges is to approximate some of the functions (\eg, the objective function or constraint terms) involved in the formulation, for instance, with an approximation satisfying an absolute error (\cref{def:approx}).
\begin{definition}
    Given a function $f\colon \mathbb X  \longrightarrow\mathbb R$, a function $\hat f\colon \mathbb X  \longrightarrow\mathbb R$ approximates $f$ with an absolute error $\delta>0$ if and only if $| f(x)-\hat f(x)| \leq \delta$ for any $x \in \mathbb X$.
    \label{def:approx}
\end{definition}
The approximation $\hat f$ in \cref{def:approx} guarantees that, for any point in the domain $\mathbb X$ of $f$, the difference between $\hat f$ and $f$ does not exceed $\delta$, \ie, the absolute approximation error. We also denote such $\hat{f}$ a $\delta$\fabsapprox of $f$. In particular, if the function $\hat f$ is a \gls{PWL} function, we say that $\hat f$ is a \gls{PWL} absolute approximation of $f$. The family of \gls{PWL} approximations is rather commonly used, and from a computational standpoint, there exist several methods to construct absolute \gls{PWL} approximations~\citep{Geissler12}. Whenever the original function $f$ has one (real) variable, we can construct a \gls{PWL} absolute approximation by employing the smallest number of pieces~\citep{Rebennack19,Ngueveu19}. However, for functions of two variables or more, there exists no algorithm to compute a \gls{PWL} absolute approximation with the minimal number of pieces \citep{Rebennack15a,Kazda21,Kazda23,Duguet22a}; in addition, the computation times required to build the approximation just in the two-variable case are way larger than in the univariate case.

In the context of \gls{MINLP}, \gls{PWL} approximations are one of the tools to approximate the original problem with a \gls{MILP}. In practice, this means that we can obtain approximate or close to optimal solutions for the original \gls{MINLP} by solving a computationally-easier \gls{MILP}. Naturally, the number of pieces involved in the \gls{PWL} approximation influences the quality of the approximation. On the one hand, a greater number of pieces in the \gls{PWL} approximation results in a tighter approximation. On the other hand, a larger number of pieces may lead to increased computing times, mostly because more pieces require more variables and constraints in the resulting \gls{MILP} \citep{Vielma10}.

\paragraph{Strategies and equilibria.} We denote as $X$ the set of combinations of all players' strategies, \ie, the Cartesian product $X=\prod_{p=1}^m X^p$ of the sets $X^p$ of all players $p$, and we denote any $x \in X$ as a \textit{profile of strategies}. 
For each player $p$, we say $x^p \in X^p$ is a \textit{pure strategy} for $p$. When players randomize over their pure strategies, they play a so-called \textit{mixed strategy}, \ie, a probability distribution $\sigma^p$ over the set of pure strategies $X^p$; let $\Delta^p$ be the space of probability distributions over $X^p$ for player $p$. Similarly to pure strategies, a \textit{profile of mixed strategies} is a vector $\sigma=(\sigma^1,\ldots,\sigma^m)$ of mixed strategies such that $\sigma^p \in \Delta^p$. We call $\Pi^p(\sigma^p;\sigma^{-p})$ the expected payoff of player $p$ associated with $\sigma$.
We employ the \textit{Nash equilibrium} \citep{Nash51} of \cref{def:NE} as a solution concept. 
\begin{definition}
   Given an \gls{IPG} instance and a scalar $\delta \in \mathbb{R}_+$, a profile of mixed strategies $\hat{\sigma}$ is a $\delta$-Nash equilibrium if no player has incentive to deviate from it, i.e., if, for any player $p \in M$, it holds that
   \begin{equation}
       \Pi^p(\hat{\sigma}^i; \hat{\sigma}^{-i}) + \delta \geq \Pi^p(\bar{x}^p;\hat{\sigma}^{-p}) , \quad   \forall \bar{x}^p \in X^p.
       \label{eq:equilibrium}
   \end{equation} 
   Whenever $\delta=0$, we call the Nash equilibrium \emph{exact}; otherwise, if $\delta>0$, we say that the Nash equilibrium is an \emph{approximate} equilibrium.
   \label{def:NE}
\end{definition}

Intuitively, \cref{eq:equilibrium} ensures that, for each player $p$, there exists no unilateral and profitable \emph{deviation} $\bar{x}^p$ such that player $p$ increases its payoff.
From both a computational-complexity and practical perspective, computing exact Nash equilibria is a challenging task; for instance, even in normal-form two-player games~\cite{XiChen2006}, \ie, a class of finite games contained in \glspl{IPG}, computing exact equilibria is far from being trivial. On top of the difficulty of computing an exact equilibrium, even solving a best response program for a player of an \gls{IPG} accounts to solving a \gls{MINLP}, a computationally-challenging problem itself \citep{Belotti13}. 
One way to potentially reduce the computational difficulty of the task of determining an equilibrium is to consider approximate equilibria instead of exact ones. This can, at least from a practical perspective, allow us to approximate the players' best-response programs and compute an approximate equilibrium by solving a series of \gls{MILP} problems.

\section{Our Methodology} 
\label{section_methodology}
In this section, we present our methodology to compute approximate Nash equilibria via \gls{PWL} approximations of the players' payoff functions. Specifically, we link approximate equilibria with approximations having absolute error guarantees. Then, we describe the existent \gls{IPG} and \gls{PWL} we employ, and, finally, we combine our results in an algorithm.

\paragraph{Our assumption.}
Without loss of generality, we represent the payoff of a player $p$ as
$$\Pi^p(x^p;x^{-p}) = f^p(x^p) + g^p(x^p;x^{-p}),$$
where $f^p$ is a function grouping all terms that depend only on the strategy of player $p$ and $g^p$ is a function grouping what we call the \textit{individual terms}. We focus on approximating $f^p$ via \gls{PWL} functions. We remark that we do not approximate $g^p$ since it includes the other players' variables $x^{-p}$ and, therefore, its \gls{PWL} approximation would introduce $x^{-p}$ in the constraints of $p$. In other words, approximating $g^p$ would require the extension of the equilibrium concept to the one of  \emph{generalized Nash equilibrium}, i.e., the equilibrium of a game where the strategy set of each player depends on the strategies of the opponents; in practice, this would prevent us from building on top of the available algorithms for \glspl{IPG}.
We assume that $f^p$ is \emph{a sum of univariate nonlinear functions}. As mentioned in \cref{section_introduction}, this assumption corresponds to the state-of-the-art in terms of approximation methods for \gls{PWL} functions, although any progress in this domain is directly applicable to our methodology.

\subsection{Approximate Equilibria and Payoff Functions}

We present a link between approximate equilibria and approximate payoff functions. Specifically, we prove that, given an \gls{IPG} instance $G$, we can compute an approximate equilibrium by computing an equilibrium of a game $\tilde G$ where we approximate each player's nonlinear payoff with a \gls{PWL} absolute approximation.
\begin{prop} \label{prop}
Let $G$ be an \gls{IPG} where the optimization problem of each player $p$ is of the form~\eqref{Player_p_problem}. Assume that $\hat{G}$ is an \gls{IPG} where each player $p$ solves the optimization problem
$$ \max_{x^p} \{ \hat{\Pi}^p(x^p;x^{-p}) := \hat{f}^p(x^p) + g^p(x^p;x^{-p}) : x^p \in X^p\},$$
where $\hat{f}^p$ is a $\delta$\fabsapprox of $f^p$ in $X^p$.
Then, (i.) a Nash equilibrium $\hat{\sigma}$ of $\hat{G}$ is a $2\delta$-equilibrium of $G$, and 
(ii.) a $\delta_M$-equilibrium of $\hat{G}$ is a $(2\delta+\delta_M)$-equilibrium of $G$.
\end{prop}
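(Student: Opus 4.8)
The plan is to reduce the whole statement to the single pointwise inequality $|f^p(x^p)-\hat f^p(x^p)|\le\delta$ guaranteed on $X^p$ by \cref{def:approx}, and to push it through the definition of an (approximate) Nash equilibrium. The first step is to note that, because $g^p$ is the same function in $G$ and $\hat G$ and $f^p,\hat f^p$ depend only on $x^p$, for any profile of mixed strategies $\sigma=(\sigma^p;\sigma^{-p})$ one has
\[
\Pi^p(\sigma^p;\sigma^{-p})-\hat\Pi^p(\sigma^p;\sigma^{-p})
=\mathbb{E}_{x^p\sim\sigma^p}\!\bigl[f^p(x^p)-\hat f^p(x^p)\bigr],
\]
so by linearity of expectation and the pointwise bound, $\bigl|\Pi^p(\sigma^p;\sigma^{-p})-\hat\Pi^p(\sigma^p;\sigma^{-p})\bigr|\le\delta$. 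This is the only place the ``lifting'' from pure points of $X^p$ to mixed strategies is used, and it is needed only for the equilibrium strategy $\hat\sigma^p$; on the deviation side of \eqref{eq:equilibrium} the quantifier ranges over pure strategies $\bar x^p\in X^p$, where \cref{def:approx} applies directly.

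For part (i), take a Nash equilibrium $\hat\sigma$ of $\hat G$, fix a player $p$ and a deviation $\bar x^p\in X^p$, and chain three inequalities: the $\delta$-closeness of $\Pi^p$ and $\hat\Pi^p$ evaluated at $(\hat\sigma^p;\hat\sigma^{-p})$, the equilibrium condition of $\hat G$, namely $\hat\Pi^p(\hat\sigma^p;\hat\sigma^{-p})\ge\hat\Pi^p(\bar x^p;\hat\sigma^{-p})$, and the $\delta$-closeness at $(\bar x^p;\hat\sigma^{-p})$. This yields
\[
\Pi^p(\hat\sigma^p;\hat\sigma^{-p})
\ \ge\ \hat\Pi^p(\hat\sigma^p;\hat\sigma^{-p})-\delta
\ \ge\ \hat\Pi^p(\bar x^p;\hat\sigma^{-p})-\delta
\ \ge\ \Pi^p(\bar x^p;\hat\sigma^{-p})-2\delta,
\]
and since $p$ and $\bar x^p$ were arbitrary, $\hat\sigma$ satisfies \eqref{eq:equilibrium} with parameter $2\delta$.

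Part (ii) is the identical argument with one extra slack term: if $\hat\sigma$ is only a $\delta_M$-equilibrium of $\hat G$, replace the middle step by $\hat\Pi^p(\hat\sigma^p;\hat\sigma^{-p})+\delta_M\ge\hat\Pi^p(\bar x^p;\hat\sigma^{-p})$, which propagates to $\Pi^p(\hat\sigma^p;\hat\sigma^{-p})\ge\Pi^p(\bar x^p;\hat\sigma^{-p})-(2\delta+\delta_M)$; statement (i) is then the special case $\delta_M=0$. There is no real obstacle here — the content is entirely in the first step (the expectation bound, together with the observation that $g^p$ cancels so the perturbation enters linearly and only through player $p$'s own randomization), and the rest is a short inequality chain done once and reused.
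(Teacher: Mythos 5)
Your proof is correct and follows essentially the same route as the paper: the same chain of three inequalities (approximation bound at $\hat\sigma$, the (approximate) equilibrium condition of $\hat G$, approximation bound at the pure deviation), with (i) obtained from (ii) by setting $\delta_M=0$. The only difference is cosmetic: you make explicit, via linearity of expectation, the lift of the pointwise bound $|f^p-\hat f^p|\le\delta$ to mixed strategies, a step the paper leaves implicit.
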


\begin{proof}
We first show that (ii.) holds. Consider an arbitrary player $p$ in $G$. Remark that $x^{-p}$ is a parameter in the optimization problem of player $p$, thus the function $\hat{\Pi}^p$ depends only on $x^p$. The function $\hat{\Pi}^p(x^p;x^{-p}) = \hat{f}^p(x^p) + g^p(x^p;x^{-p})$ is thus a $\delta$\fabsapprox of $\Pi^p(x^p;x^{-p}) = f^p(x^p) + g^p(x^p;x^{-p})$ for any $x^{-p}$ because $\hat{f}^p$ is a $\delta$\fabsapprox of $f^p$. %
It follows that
\begin{align}
    \Pi^p(\hat{\sigma}^p;\hat{\sigma}^{-p}) &\geq \hat{\Pi}^p(\hat{\sigma}^p;\hat{\sigma}^{-p}) - \delta, &\label{proof_eq1} \\
    &\geq \hat{\Pi}^p(x^p;\hat{\sigma}^{-p}) - \delta - \delta_M & \quad  \forall x^p \in X^p, \label{proof_eq2} \\
    &\geq \Pi^p(x^p;\hat{\sigma}^{-p}) - 2 \delta - \delta_M & \quad \forall x^p \in X^p, \label{proof_eq3}
\end{align}
where we exploit the fact $\hat{\Pi}$ is a $\delta$\fabsapprox in (\ref{proof_eq1}) and (\ref{proof_eq3}), and the fact $\hat{\sigma}$ is a $\delta_M$-equilibrium in (\ref{proof_eq2}). The above shows that $\hat{\sigma}$ is a $(2 \delta+\delta_M)$-equilibrium of $G$. 
Finally, (i.) holds as a special case of (ii.) by setting $\delta_M = 0$. $\hfill\square$
\end{proof}

Essentially, \cref{prop} claims that a Nash equilibrium of an approximate game $\hat G$ is an approximate equilibrium of the game $G$. As we explain in \cref{subsection_algo}, we employ \cref{prop} to compute $\delta$-equilibria of games where the payoffs are nonlinear.

\subsection{SGM and PWL Approximations} 
\label{subsection_SGM}

In this section, we briefly describe the \gls{SGM} algorithm to compute a Nash equilibrium for \glspl{IPG}~\cite{Carvalho22} and how to build \gls{PWL} approximations. 

\paragraph{The sample generation method.} \gls{SGM} is an iterative method that fundamentally involves two steps. First, \gls{SGM} computes an equilibrium $\sigma$ of an \emph{inner-approximated game}, that is, it computes an equilibrium in a game where the strategy set of each player $p$ is a subset (\ie, an inner approximation) of $X^p$; in practice, we assume that the inner approximation of each player's strategy set is nonempty, \ie, it contains at least one strategy. Second, given the equilibrium $\sigma$ to the inner-approximated game, \gls{SGM} computes the best response of each player $p$ to $\sigma^{-p}$; in other words, it checks, for each player $p$, if there exist unilateral and profitable deviations from $\sigma$. On the one hand, if, for each player $p$ and opponents' strategies $\sigma^{-p}$, the payoff of a best response $\bar{x}^p$ is less than or equal to the one of $\sigma$ plus $\delta$, then  $\sigma$ is a $\delta$-equilibrium for the \gls{IPG}. Equivalently, \gls{SGM} verifies that the stopping criterion $\Pi^p(\sigma^p;\sigma^{-p}) + \delta\ge \Pi^p(\bar{x}^p;\sigma^{-p}) $ holds for each player $p$. On the other hand, if there exists at least one profitable deviation $\bar{x}^p$, \gls{SGM} enlarges the inner approximation of the first step by including $\bar{x}^p$ and iterates again.
When solving the inner-approximated game, \gls{SGM} solves a normal-form game with one of the several algorithms available in the literature. For ease of implementation, and due to the existence of powerful mixed-integer solvers, we employ the so-called \emph{feasibility formulation} from Sandholm et al.~\cite{sandholm2005mixed}\footnote{Although Sandholm et al.~\cite{sandholm2005mixed} concentrates on 2-player games, the formulation can be generalized for $m$-player games in a straightforward manner.}. It also has the advantage of allowing already computed equilibria to be used as warm start. Practically, solving the inner-approximate game accounts to solving a mixed-integer program that is infeasible if no equilibrium exists. Since this is a feasibility program, we also add an objective function that minimizes the support size, \ie, the number of strategies played by each player with positive probability; we employ this objective because there is computational evidence that equilibria tend to have small supports~\cite{Porter2008}.

\paragraph{PWL approximations.}
We approximate the best-response programs in \gls{SGM} with \gls{PWL} $\delta$-absolute approximations of univariate functions. More precisely, when we need to approximate a univariate function $f$ with a \gls{PWL} function $\hat{f}$, we can employ either the algorithm by Codsi et al.~\cite[Algorithm 5]{Codsi21} or the algorithm by Codsi et al.~\cite[Algorithm 4]{Codsi21}. The former algorithm approximates $f$ with the least number of pieces if $f$ is convex or concave \citep[Lemma 4]{Codsi21}, whereas the latter always approximates $f$ with the least number of pieces in the general case, but is slower in practice. As for the algorithm in Codsi et al.~\cite[Algorithm 5]{Codsi21}, it builds a \gls{PWL} function $\hat{f}$ with the least number of pieces such that it is a $\delta$\fabsapprox of $f$. At each iteration, the algorithm constructs a new piece (segment) of $\hat f$ from a specific tangent of the upper-estimator function $f+\delta$ (or lower-estimator $f-\delta$). In \cref{example_lina}, we illustrate the rationale behind this algorithm. Specifically, the red piece intersects the upper estimation $x^3+0.1$ at its endpoints, and intersects the lower estimation $x^3-0.1$ on one point, making it a tangent of the lower estimation. Also, \cref{example_lina_complete} features the \gls{PWL} function created by Codsi et al.~\cite[Algorithm 4]{Codsi21} for the approximation of function $f(x)=x^3$ with absolute error $0.1$ and domain $[-1,1]$.

\begin{figure}[!ht]
\centering
\begin{subfigure}{0.45\textwidth}
    \centering
    \includegraphics[width=\textwidth]{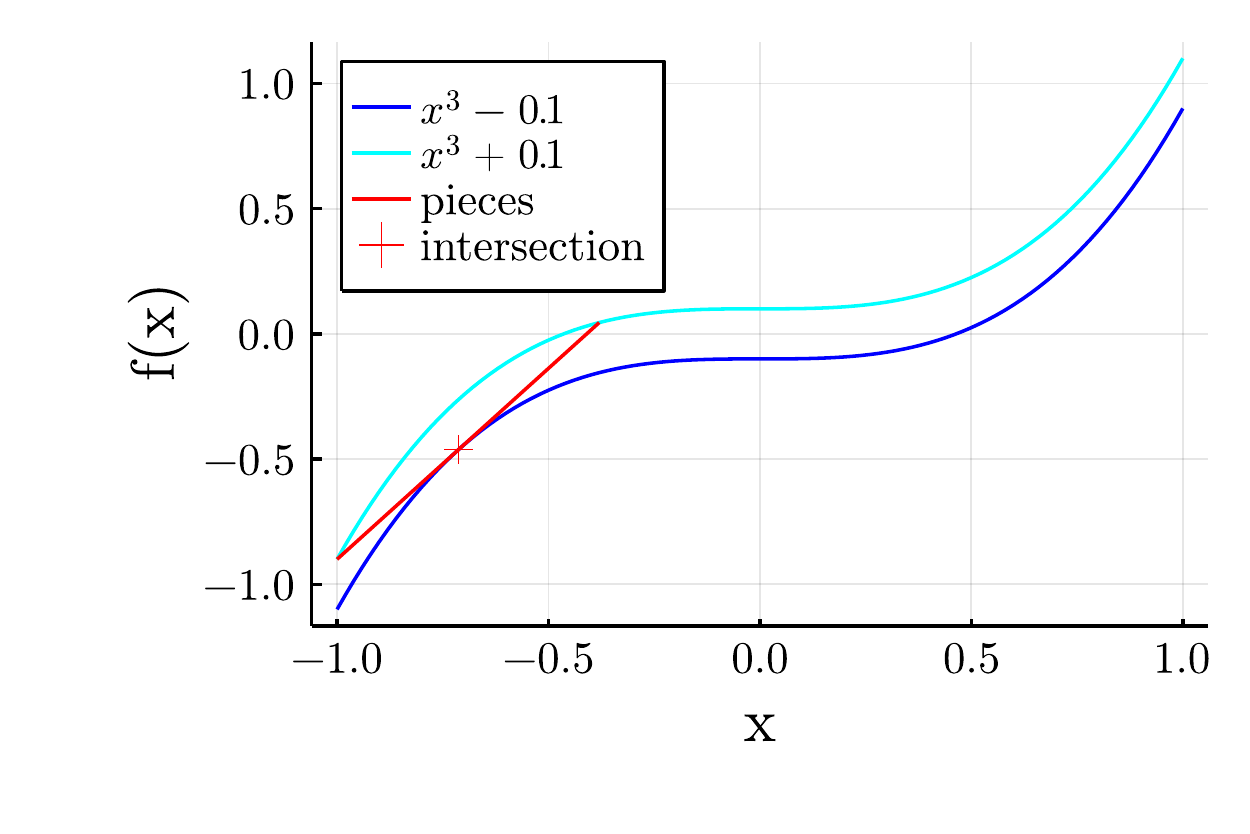}
    \caption{Creation of a piece of the \gls{PWL} approximation by Codsi et al.~\cite[Algorithm 5]{Codsi21}.}
    \label{example_lina}
\end{subfigure}\hfill
\begin{subfigure}{0.45\textwidth}
    \centering
    \includegraphics[width=\textwidth]{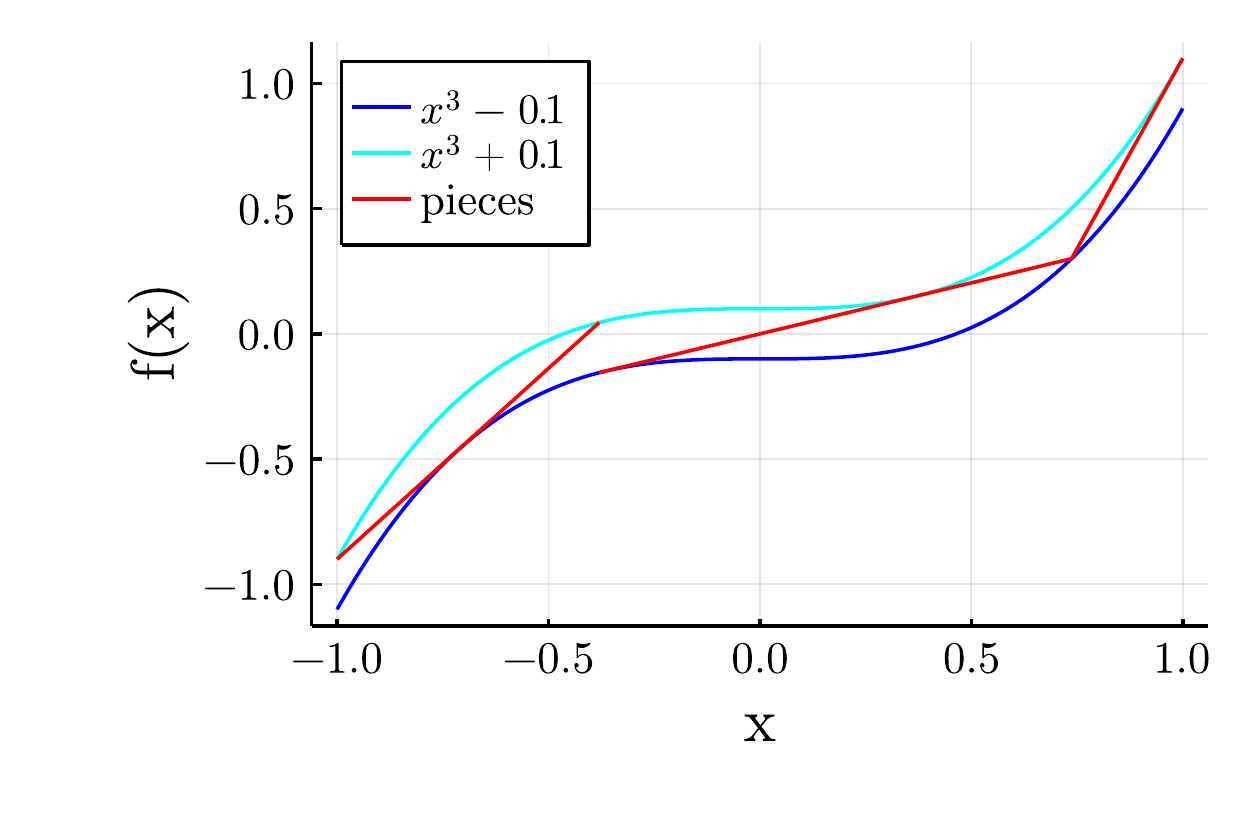}
    \caption{Complete \gls{PWL} approximation with three pieces by Codsi et al.~\cite[Algorithm 4]{Codsi21}.}
    \label{example_lina_complete}
\end{subfigure}
\caption{Illustration of \gls{PWL} approximation.}
\label{example_lina_figure}
\end{figure}

\subsection{Computing Approximate Equilibria} \label{subsection_algo}

In this section, we employ the algorithmic ingredients we introduced to present our main algorithmic procedures for computing approximate equilibria.
Specifically, we propose two algorithmic procedures to compute a $\delta_f$-equilibrium of an \gls{IPG} $G$ based on \gls{SGM} and $\delta_f$\fabsapproxs of the players' payoffs. %

\paragraph{Direct approximation. } In a nutshell, our direct approximation procedure performs a single-round approximation of the players' payoffs, before resorting to \gls{SGM} to compute an equilibrium. In particular, \gls{SGM} will employ the approximated players' payoff functions in the best responses computation. Our direct approximation works as follows. First, for each player $p\in M$ and a parameter $\mu \in ]0,1[$, we approximate the function $f^p$ with a \gls{PWL} $\mu \frac{\delta_f}{2}$\fabsapproxs $\hat{f^p}$. Given $\hat{G}$, \ie, the resulting approximate game of $G$, we compute a $(1-\mu) \delta_f$-equilibrium $\hat{\sigma}$ of $\hat{G}$ with \gls{SGM}.
According to \cref{prop}, $\hat{\sigma}$ is a $2\mu \frac{\delta_f}{2}+(1-\mu) \delta_f=\delta_f$-equilibrium of $G$. 
The parameter $\mu$ determines which proportion of the tolerance $\delta$ is given to the two operations needing a tolerance: the \gls{PWL} approximation and the \gls{SGM}. It thus operates a tradeoff that should be such that, from a computational perspective, the approximation is computationally easy to compute and \gls{SGM} terminates.

\paragraph{$2$-level approximation.} In contrast to the direct-approximation procedure, we introduce a $2$-level approximation procedure where we recursively refine the approximation. The idea is to produce first a $\delta$-Nash equilibrium with $\delta > \delta_f$ so that it is computed faster than the $\delta_f$-Nash equilibrium we are looking for, and then use this approximate equilibrium as a warm start to produce the $\delta_f$-Nash equilibrium. The warm start should reduce drastically the number of iterations inside \gls{SGM} and thus decrease the computation time of the second call to \gls{SGM}. We present our procedure in \cref{algorithm}. Given an integer programming game $G$, a parameter $\mu \in ]0,1[$, a parameter $\delta_f$ and a parameter $\delta_0$ greater than $\mu \frac{\delta_f}{2}$, the algorithm returns a $\delta_f$-equilibrium of $G$. This algorithm uses two procedures. The procedure \texttt{ApproximateIPG}$(G,\delta)$ computes a \gls{PWL} $\delta$\fabsapprox of all the players' payoff functions, creating an approximate game $\hat G$ with the same set of players and strategies, and the payoffs given by the computed $\delta$-approximations. As for the procedure \texttt{SGM}$(G,\delta)$, it finds a $\delta$-equilibrium to game $G$; this procedure can additionally have as input a profile of mixed-strategies functioning as a warm start to the mixed-integer program by~\cite{sandholm2005mixed} used in \gls{SGM} (recall~\cref{subsection_SGM}).

\begin{algorithm}[!ht]
\textbf{Input:} A game $G$, starting tolerance $\delta_0$, final tolerance $\delta_f$ and parameter $\mu \in (0,1)$ \\ %
\textbf{Output:} A $\delta_f$-equilibrium $\hat \sigma$
\begin{algorithmic}[1]
\State $\hat{G}_0 = \texttt{ApproximateIPG}(G,\delta_0)$ \Comment{create the approximate game $\hat{G}_0$}
\State $\hat{\sigma}_0 = \texttt{SGM}(\hat{G}_0,(1-\mu) \delta_f)$ \Comment{find a $(1-\mu) \delta_f$ approximate equilibrium of $\hat{G}$}
\State $\hat{G} = \texttt{ApproximateIPG}(G,\mu \frac{\delta_f}{2})$
\State $\hat{\sigma} = \texttt{SGM}(\hat{G},(1-\mu) \delta_f, \text{warm start} = \hat{\sigma}_0)$ \Comment{use $\hat{\sigma}_0$ as warmstart} 
\State \textbf{return} $\hat{\sigma}$
\end{algorithmic}
\caption{The $2$-level approximation procedure}
\label{algorithm}
\end{algorithm}

\section{Experiments and Computational Results} \label{section_application}
In this section, we test the algorithms introduced in the previous section. 
In the first place, we describe an \gls{IPG} motivated by an existent cybersecurity investment game. Afterward, we detail the experimental setup and we analyze our computational results. In practice, our approximation procedures are efficient and manage to compute approximate equilibria within modest computing times.

\subsection{The Cybersecurity Investment Game} \label{subsection_model}

The \gls{GCI} is a non-cooperative game where a set of players (retailers) sell a homogeneous product in an online marketplace. When selling the product, the transaction can be subject to cyberattacks, which damage the player's reputation and diminish its payoff. This game extends the model of \citep{Nagurney17} by including integer decisions and letting players have nonlinear payoff functions, while still respecting the practical considerations proposed in \citep{Nagurney17}. We refer to \citep{Nagurney17} for a complete description of the motivations behind this model. In this paper, we aim to solve \gls{GCI} to empirically validate the value and performance of the algorithms we introduced in \cref{section_methodology}. 

\subsection{The Model}
In \gls{GCI}, the set of players $M$ represents the retailers selling a homogeneous (or similar) product in a finite set of markets $J=\{1,...,n\}$. Each retailer $p \in M$ decides the quantity $Q_{j}^p$ of product sold in each market $j \in J$ such that $Q_{j}^p$ does not exceed an upper bound $\bar{Q}_{j}^p$. On the one hand, as the transaction takes place online, external attacker can potentially perform a cyberattack on the transaction; if the attack is successful, the retailer is deemed responsible for it, and incurs in financial and reputation damages. On the other hand, each retailer also decides a level of cybersecurity $s^p \in [0,\bar{s}^p]$ to protect its transactions, with $\bar{s}^p < 1$. The higher the level of cybersecurity $s^p$, the least likely cyberattacks will be successful on the transactions of player $p$. Specifically, if $s^p=1$, then no cyberattack on the transactions of player $p$ will succeed. Furthermore, when player $p$ performs a transaction with a market $j$, it incurs in a fixed one-time setup cost, activated through a binary variable $b_{j}^p$, \ie, $b_{j}^p=1$ if and only if player $p$ sells some products in market $j$. All considered, the optimization problem of each player $p$ is the following box-constrained \gls{MINLP}:
\begin{subequations}
\begin{align}
    \label{opt_problem}
        \max \qquad & \sum_{j \in J} \hat{\rho}_{j}(Q^p,s) Q_{j}^p - c^{p}_\text{prod} \sum_{j \in J} Q_{j}^p - \sum_{j \in J} c^{p}_{j,\text{setup}} b_{j}^p  \\
        & - \sum_{j \in J} \left( c^{p}_{j,\text{quad}} (Q_{j}^p)^2 + c^p_{j,\text{lin}} Q_{j}^p \right) - h^p(s^p) - \beta(s) D^p \\
        \text{s.t.} \qquad & Q^p=\sum_{i \in M} \sum_{j \in J} Q^i_{j}, \\
        & 0 \leq Q_{j}^p \leq b_{j}^p \bar{Q}_{j}^p, \quad b_{j}^p \in \{0,1\} \qquad \qquad \forall j \in J, \\
        & 0 \leq s^p \leq \bar{s}^p. \label{opt_problem_c}
\end{align}
\label{Program:Game}
\end{subequations}
The objective function of \cref{opt_problem} contains the following terms:
\begin{enumerate}
    \item \textbf{Profits.} The term $\sum_{j \in J} \hat{\rho}_{j}(Q,s) Q_{j}^p$ represents the profits of player $p$. The variable $Q^p$ is the aggregation of the variables $Q_{j}^p$ for all players $p$ and markets $j$. The function $\hat{\rho}_j(Q^p,s)$ is the unitary selling price fixed by market $j$ given by the inverse demand function $(q_j+ r_j \bar{s}) - m_j \sum_{i \in M} Q_{j}^i $, where $\bar{s}=\frac{1}{m}\sum_{p=1}^m s^p$ is the players' average cybersecurity level, and $q_j$, $m_j$ and $r_j$ are parameters. This price decreases with the total quantity of product sold, and increases with the average cybersecurity level of the players.
    \item \textbf{Production costs.} The term $c^p_{\text{prod}} \sum_{j \in J} Q_{j}^p$ represents the production costs of player $p$, where $c^p_{\text{prod}} \in \mathbb{R}_+$ is a parameter. 
    \item \textbf{One-time setup costs.} The term $\sum_{j \in J} c_{j,\text{setup}}^{p} b_{j}^p$ represents the one-time setup transaction costs player $p$ pays for entering each market $j$, with $c_{j,\text{setup}}^{p}\in \mathbb{R}_+$ being a parameter.
    \item  \textbf{Variable transaction costs.} The term $\sum_{j \in J} \left( c^{p}_{j,\text{quad}} (Q_{j}^p)^2 + c^p_{j,\text{lin}} Q_{j}^p \right)$ represents the variable transaction costs of player $p$, which is a sum of linear and quadratic terms in $Q_{j}^p$. The parameters $c^p_{j,\text{quad}}$ and $c^{p}_{j,\text{lin}}$ are nonnegative real values.
    \item  \textbf{Cybersecurity costs.} The term $h^p(s^p)$ is a function representing the cybersecurity cost of player $p$, bounded by the maximum cybersecurity budget $B^p$. We will analyze this term later in this section.
    \item \textbf{Cybersecurity damage.} The term $\beta(s) D^p $ represents the expected cost player $p$ pays in case of a successful cyberattack. The function $\beta(s)$ is the probability of a successful attack when the security levels are $s=(s^1,...,s^m)$ and $D^p$ is the estimated cost of a successful cybersecurity attack on player $p$. The function $\beta(s)$ is given by $(1-s^p)(1-\bar{s})$.
\end{enumerate}
The difficulty of \cref{Program:Game} comes from the integer variables $b_{j}^p$, the quadratic terms of the players' payoffs and the nonlinear cybersecurity cost $h^p(s^p)$. 

\subsubsection{Cybersecurity Costs}

In contrast to~\citep{Nagurney17}, we upper bound the cybersecurity level $s^p$ to $\bar{s}^p$. Nagurney et al.~\cite{Nagurney17} employ a nonlinear constraint $h^p(s^p)\leq B^p$, with $h^p: [0,1] \mapsto \mathbb{R}^+$ representing the cost of reaching the cybersecurity level $s^p$. Nagurney et al.~\cite{Nagurney17} also assumes $h^p$ is an increasing function, and $h^p(0) = 0$, $h^p(1) = \infty$, because, no matter the amount spent on cybersecurity, in reality, a cyberattack is always a possibility. Under these conditions, $h^p(s^p) \leq B^p \Leftrightarrow s^p \leq \bar{s}^p$ with $\bar{s}^p$ uniquely defined by $h^p(\bar{s}^p) = B^p$; therefore, the nonlinear constraint $h^p(s^p) \leq B^p$ can be equivalently replaced by the linear constraint $s^p \leq \bar{s}^p$, with $\bar{s}^p$ strictly less than $1$ because of the bounds on $h^p$.
Due to these reasons, in our experiments, we use three different functions for the cybersecurity cost: an inverse square root function, a logarithmic function and a nonconvex function. It allows to experiment with best responses of different subclasses of \gls{MINLP}.

\paragraph{Inverse square root function.} The inverse square root function $ h^p_{\text{ISR}}(s^p)$ we consider takes the form of $\textstyle{\alpha^p \left(1/\sqrt{1-s^p} - 1\right)}$ with a positive parameter $\alpha^p$. It is convex on $[0,1[$.
We express $h^p_{\text{ISR}}(s^p)$ as a second-order cone program, with two auxiliary non-negative continuous variables $s_{\mathit{NL}}, t_{\mathit{NL}}$, two quadratic constraints $s_{\mathit{NL}}^2 \leq 1 - s^p$ and $s_{\mathit{NL}} t_{\mathit{NL}} \geq 1$, and a linear term in the objective function $-\alpha^p (t_{\mathit{NL}}-1)$.
Indeed, the term in the objective function forces $t_{\mathit{NL}}$ to be finite because it is a maximization problem. Combined with the constraint $s_{\mathit{NL}} t_{\mathit{NL}} \geq 1$, it forces $s_{\mathit{NL}} > 0$ and $t_{\mathit{NL}} \geq 1/s_{\mathit{NL}}$. In addition, with the constraint $s_{\mathit{NL}}^2 \leq 1 - s^p$, we also have $t_{\mathit{NL}} \geq 1/\sqrt{1-s^p}$. Finally, the objective term $-\alpha^p (t_{\mathit{NL}}-1)$ represents the inverse square root term $\alpha^p/\sqrt{1-s^p} - 1$ because the maximization forces $t_{\mathit{NL}}$ to be as small as possible. With this quadratic formulation of ${\textstyle h^p_{\text{ISR}}}$, the best responses  \cref{Program:Game} are the optimal solutions of a \gls{MIQCQP}.

\paragraph{Logarithmic function.} The logarithmic function we consider is $h^p_{\text{log}}(s^p) = -\alpha^p log(1-s^p)$, which is convex on $[0,1[$.
We can model this function with the linear term $\alpha^p t_{\mathit{NL}}$ in the objective function and the exponential cone constraint $(1-s^p,1,t_{\mathit{NL}}) \in K_{exp}$, where $K_{exp}$ is the convex subset of $\mathbb{R}^3$ defined as 
$\{(x_1,x_2,x_3) : x_1 \geq x_2 \exp(x_3/x_2), x_1,x_2 \geq 0\}$ \citep{mosek}. The best responses \cref{Program:Game} with this formulation of $h^p_{\text{log}}$ are the optimal solution of exponential cone problems.

\paragraph{Nonconvex function.} The nonconvex function we consider is 
$$h^p_{\text{NCF}}(s^p) = \textstyle{\alpha^p (\left(1/\sqrt{1-s^p} + 2/(1+exp(-20 s^p))\right)} - 2.$$
The function $h^p_{\text{NCF}}$ is nonconvex and increasing on $[0,1[$. With this function, the best responses \cref{Program:Game} can be obtained by solving a nonconvex \gls{MINLP}.

\subsection{Numerical Experiments} \label{subsection_numerical}
The presence of integer variables in \gls{GCI} means that the methodology of \citep{Nagurney17} cannot be used as a baseline.
Instead, we employ \gls{SGM} as baseline, and compare it to our approximation procedures: the direct approximation procedure and the $2$-level approximation procedure. 

\paragraph{Instances. } We generate 10 instances for each triplet $(m,n,h^p)$, where $m \in \{2,3,\ldots,7\}$, $n \in \{2,3,\ldots,10\}$ and $h^p = h^p_{\text{ISR}}$, $h^p_{\text{log}}$ or $h^p_{\text{NCF}}$. More precisely, we generate the parameters of those instances with a uniform distribution on regularly sampled intervals close to the values used in the instances of Nagurney et al.~\cite{Nagurney17}, as described in Table \ref{table_intervals}.

\begin{table}[!ht]
\centering
\resizebox{0.7\columnwidth}{!}{
\begin{tabular}{c@{\hspace{10em}}r}
\hline
Parameter & Uniform distribution's domain\\
\hline
   $q_j$ & $\{100,101,...,200\}$\\[0.3em]
   $m_j $ & $ \{0.5,0.51,...,2\}$\\[0.3em]
   $r_j $ & $ \{0.1,0.11,...,0.5\}$\\[0.3em]
   $c^p_{\text{prod}} $ & $ \{1,2,...,10\}$\\[0.3em]
   $c^{p}_{j,\text{setup}}$ & $ \{500,501,...,2000\}$\\[0.3em]
   $c^{p}_{j,\text{lin}}$ & $\{1,1.01,...,4\}$\\[0.3em]
   $c^{p}_{j,\text{quad}} $ &  $\{0.25,0.26,...,1\}$\\[0.3em]
   $\alpha^p $ & $ \{1,2,...,10\}$\\[0.3em]
   $D^p $ & $ \{50,51,...,100\}$\\[0.3em]
    $\bar{Q}_{j}^p$ & $ \{50,51,...,200\}$\\[0.3em]
    $B^p$ & $ \{0.5,1,...,5\}$ \\ \hline
\end{tabular}
}
    \caption{Parameters of the generated instances.}
    \label{table_intervals}
\end{table}

In total, we generate $1620$ instances and we solve them with the \textit{direct approximation procedure}, the \textit{$2$-level approximation procedure}, and with \gls{SGM}. %
We categorize our instances into six subsets to better highlight the factors influencing the performance of the used methods. Specifically, we distinguish: \textit{logarithmic}, \textit{inverse square root} and \textit{nonconvex} cybersecurity cost functions, and \textit{small} ($m = 2,3,4$) and \textit{big} ($m = 5,6,7$) instances. We call the resulting subsets \textit{log234}, \textit{log567}, \textit{root234}, \textit{root567}, \textit{nonconvex234} and \textit{nonconvex567}.

\paragraph{Experimental setup. } The Appendix in Section \ref{appendix} complements the experimental settings described in this paragraph. 

When employing \gls{SGM}, we distinguish between \textit{SGM-MIQCQP}, \textit{SGM-ExpCone} and \textit{SGM-SCIP} depending on the cybersecurity cost function; if we use $h^p_{\text{ISR}}$, the best response solver is Gurobi \cite{gurobi} and the method is SGM-MIQCQP; if it is $h^p_{\text{log}}$, the best response solver is MOSEK \cite{mosek} and the method is SGM-ExpCone; if it is $h^p_{\text{NCF}}$, the best response solver is SCIP \cite{Gamrath20}. When we employ the direct-approximation procedure and the $2$-level approximation procedures, the best response solver is Gurobi. The selection of the above-mentioned solvers is mainly guided by the state of the art of advances in the respective type of best response programs. The exception is SCIP which is a state of the art solver for \glspl{MINLP} among noncommercial solvers\footnote{We do not have access to the commercial state of the art nonconvex solvers according to
the Mittelmann's benchmarks~\cite{Mittelmann2023}.}.

We compute $\delta_f$-approximate equilibria with $\delta_f = 10^{-4}$. To produce a $\delta_f$-equilibrium, we modify the stopping criterion of \gls{SGM} to take into account the error produced by the best response solver. Indeed, a best response solver such as Gurobi or MOSEK uses a relative gap and an absolute gap as stopping criterion. We set the relative gap to $0$, so that only the absolute gap $\delta_{gap}$ remains, i.e., it will return a solution once it is proven not to be more than $\delta_{gap}$ away from the optimum. Thus, the stopping criterion of \gls{SGM} becomes, for a given vector of mixed-strategies $\sigma$,
$$ \max_{x^p \in \hat{X}^p} \hat{\Pi}^p(x;\sigma^{-p}) - \hat{\Pi}^p(\sigma^p;\sigma^{-p}) < \delta_f - \delta_{gap}, \qquad \forall p \in M, $$
where $\hat{\Pi}^p$ and $\hat{X}^p$ are the approximated payoff and strategy set of player $p$. 
We use a \gls{MILP} model to compute the solution of the restricted normal-form game, and we solve it via Gurobi. As opposed to Sandholm et al.~\cite{sandholm2005mixed}, we avoid using explicit Big-$M$ formulations, and we employ so-called indicator constraints \citep{bonami2015mathematical}; in practice, this will let the solver decide the best strategy to reformulate the logical conditions. 
For Algorithm \ref{algorithm} of the $2$-level approximation procedure, we set the parameter $\mu$ to $0.5$ and the initial approximation level $\delta_0$ to $0.05$. The value of $\delta_0$ is chosen as a balance between relatively small approximation error and relatively small best response models; a tolerance of $\delta_0=0.05$ implies that a $\delta$-Nash equilibrium computed with it would be at least a $\delta$-Nash equilibrium with $\delta = 0.1$, which is $1000$ times $\delta_f$ used in the experiments. Our goal with this value is to show that using a really large value $\delta_0$ compared to $\delta_f$ can still be sufficient to find a $\delta_f$-Nash equilibrium faster than for the direct approximation.
We set a time limit of $900$ seconds for \gls{SGM}, as the time spent outside of \gls{SGM} is often small (\eg, less than $5$ seconds). We employ an Intel Core i5-10310U CPU, with $32$ GB of RAM, and Gurobi~ $9.5$ and MOSEK~$10.0$ running on $4$ threads and SCIP 7.0.3.

\begin{figure}[!ht]
\centering
\begin{subfigure}{.5\textwidth}
  \centering
 \includegraphics[width=0.9\textwidth]{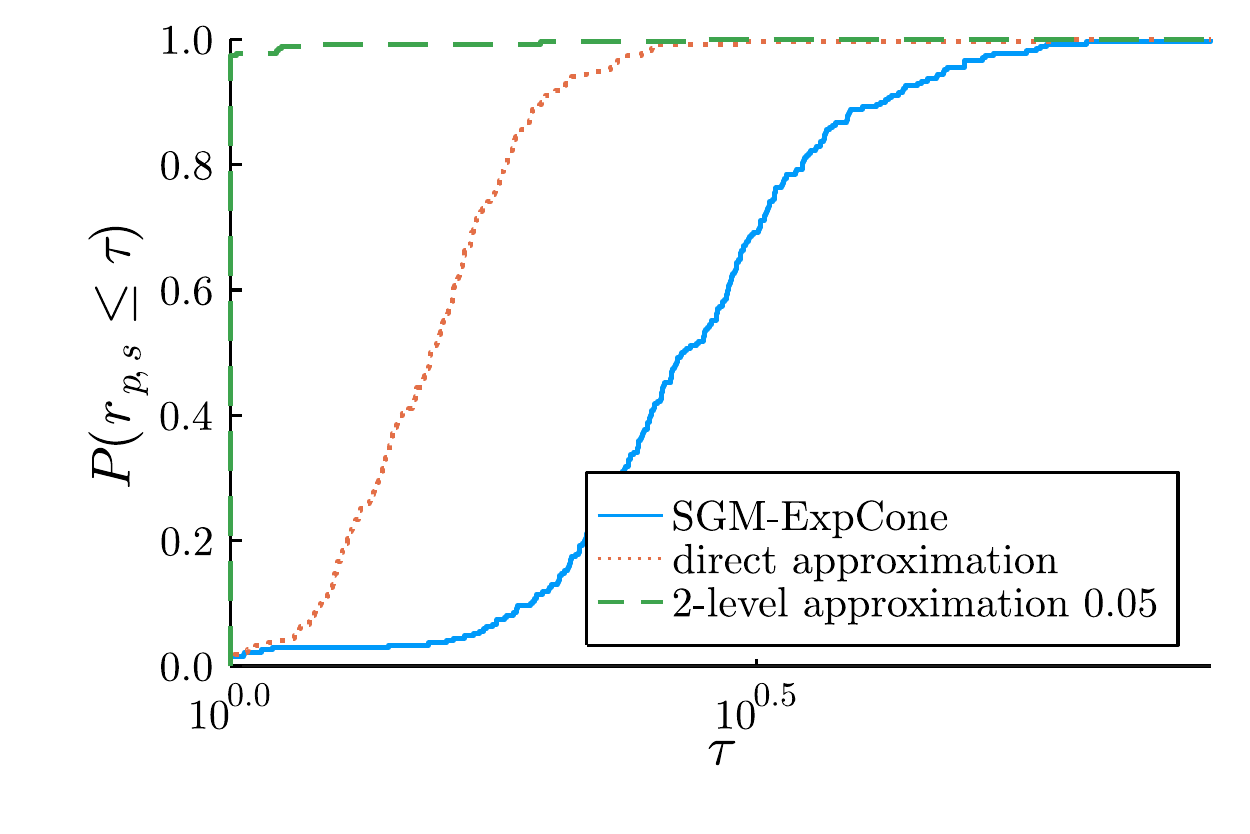}
    \caption{Results with 2 to 4 players (log234 instances).}
    \label{figure_log234}
\end{subfigure}%
\begin{subfigure}{.5\textwidth}
  \centering
  \includegraphics[width=0.9\textwidth]{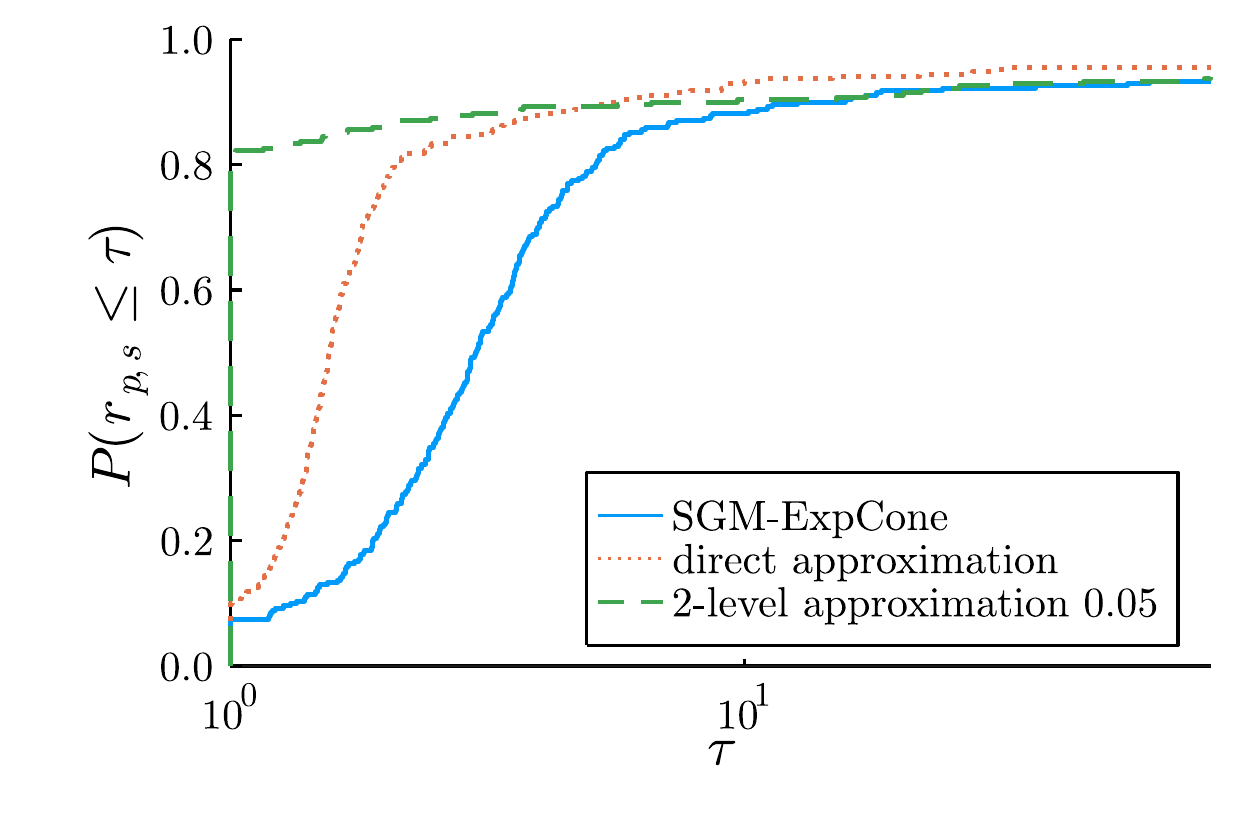}
    \caption{Results with 5 to 7 players (log567 instances).}
    \label{figure_log567}
\end{subfigure}
\caption{Performance profiles with logarithmic cybersecurity cost function.}
\label{figure_log}
\end{figure}

\begin{figure}[!ht]
\centering
\begin{subfigure}{.5\textwidth}
  \centering
 \includegraphics[width=0.9\textwidth]{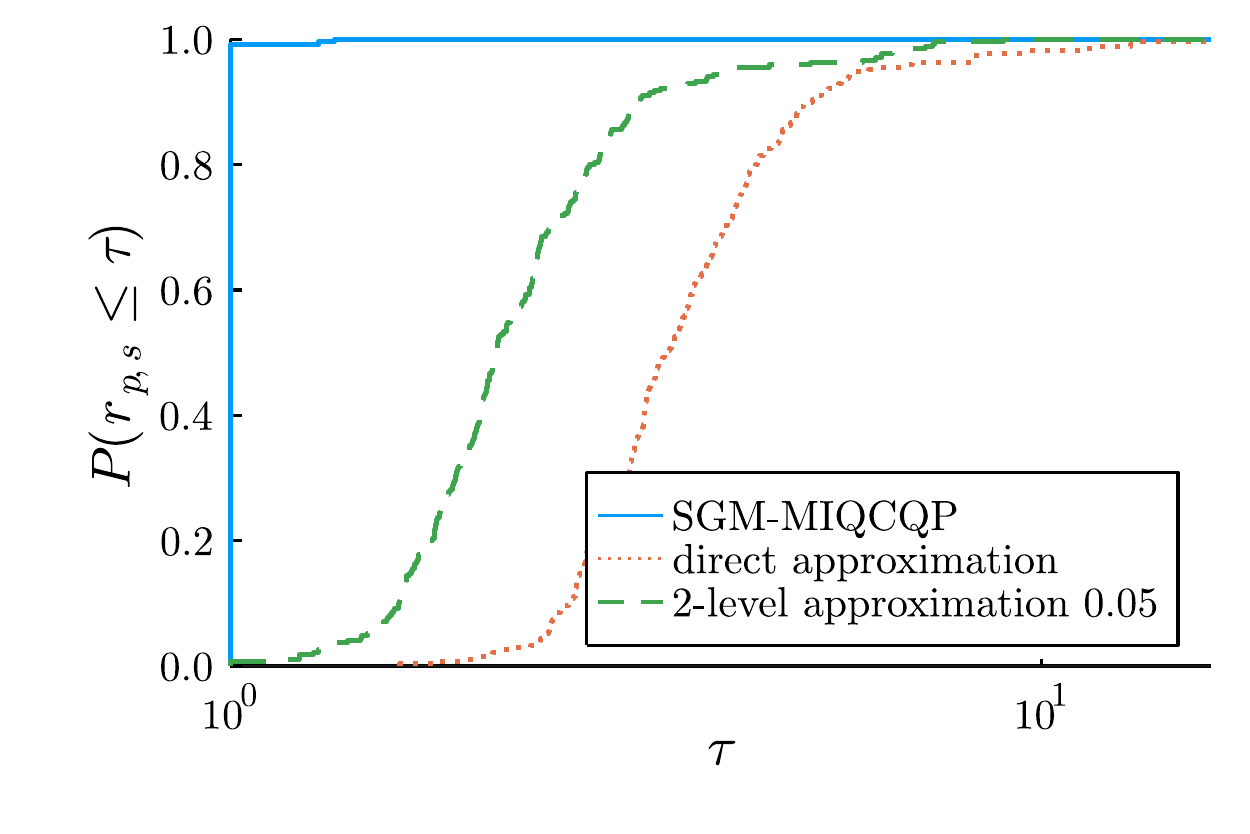}
    \caption{Results with 2 to 4 players (root234 instances).}
    \label{figure_root234}
\end{subfigure}%
\begin{subfigure}{.5\textwidth}
  \centering
  \includegraphics[width=0.9\textwidth]{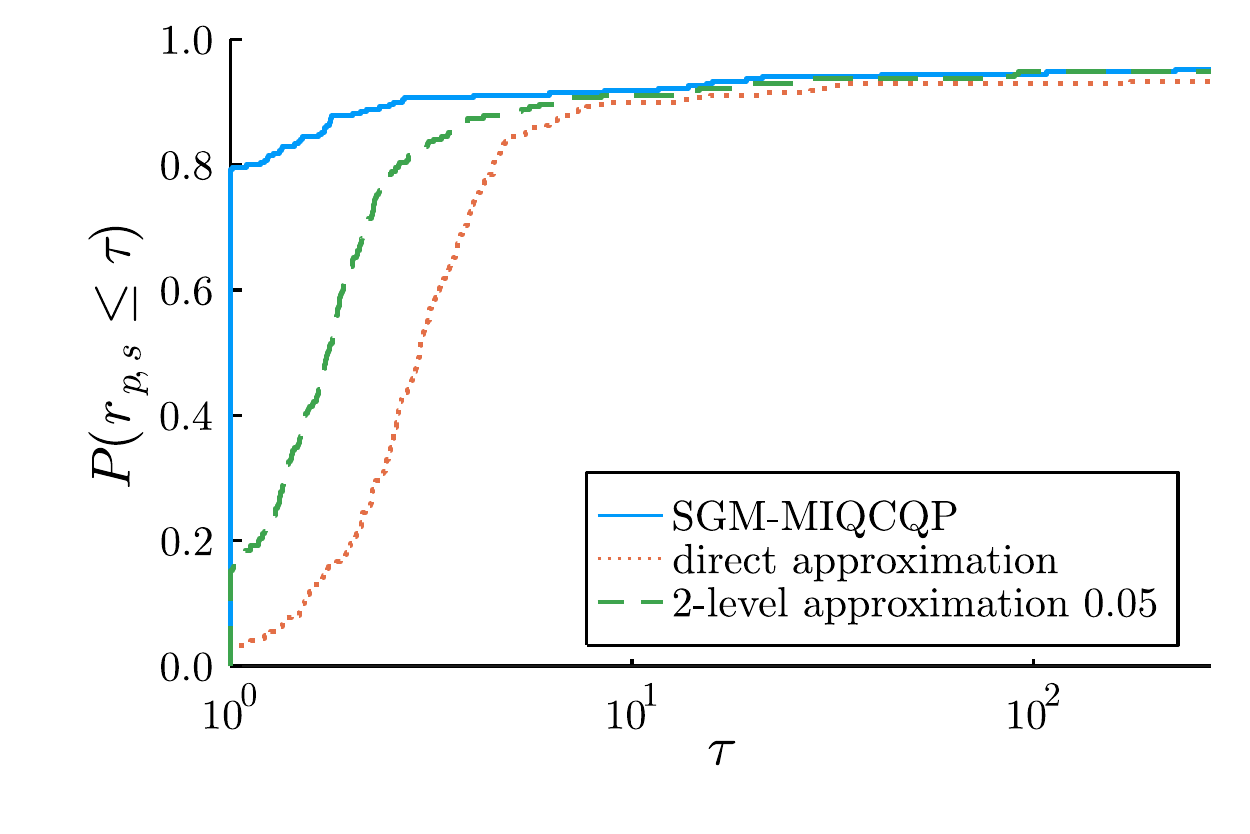}
    \caption{Results with 5 to 7 players (root567 instances).}
    \label{figure_root567}
\end{subfigure}
\caption{Performance profiles with inverse square root cybersecurity cost function.}
\label{figure_root}
\end{figure}

\begin{figure}[!ht]
\centering
\begin{subfigure}{.5\textwidth}
  \centering
 \includegraphics[width=0.9\textwidth]{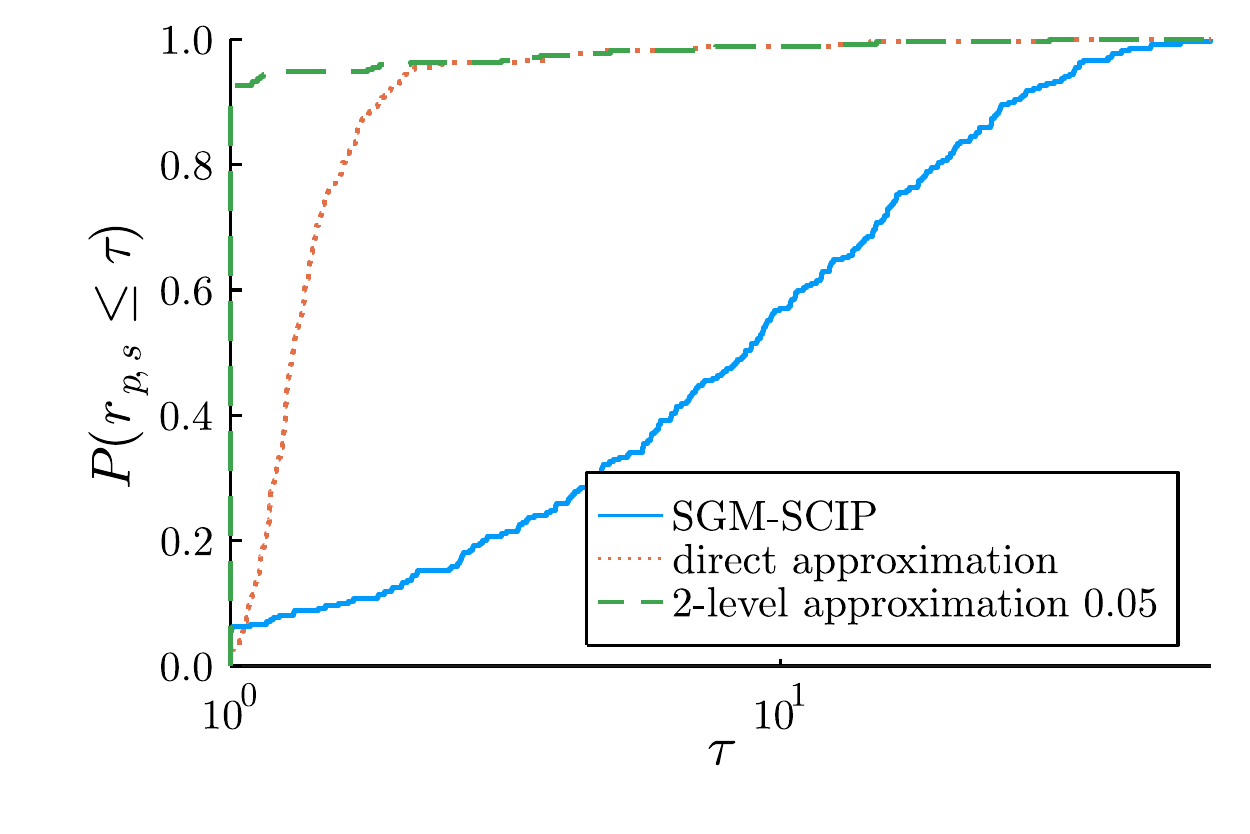}
    \caption{Results with 2 to 4 players (nonconvex234 instances).}
    \label{figure_nonconvex234}
\end{subfigure}%
\begin{subfigure}{.5\textwidth}
  \centering
  \includegraphics[width=0.9\textwidth]{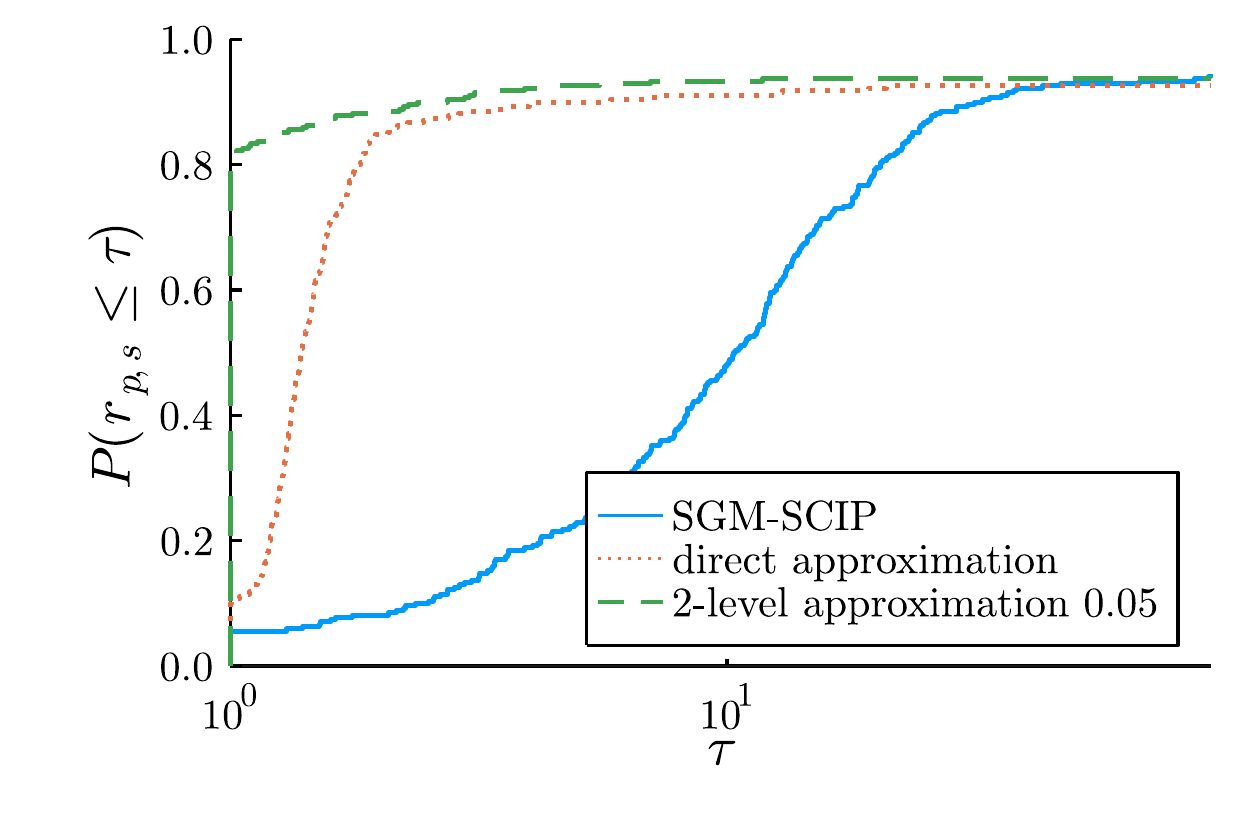}
    \caption{Results with 5 to 7 players (nonconvex567 instances).}
    \label{figure_nonconvex567}
\end{subfigure}
\caption{Performance profiles with nonconvex cybersecurity cost function.}
\label{figure_nonconvex}
\end{figure}

We present the experimental results as performance profiles in \cref{figure_log,figure_root}. According to \citep{dolan02}, we call performance ratio of a given instance with solver $s$ and instance $i$ among a set of solvers $S$ the number greater or equal to $1$ computed as $t_{i,s} / \min_{s' \in S} t_{i,s'}$, where $t_{i,s}$ is the computation time of solver $s$ for instance $i$. The performance profiles show the proportion of instances solved by the corresponding solver with a performance ratio of no more than $\tau$. Finally, we display the performance ratio with a logarithmic scale. Table \ref{table_statistics} shows the percentage of instances solved, the geometric mean time and the average number of iterations in \gls{SGM} grouped by the method and the subset of instances. The geometric mean is computed as $M = \sqrt[n]{\Pi_{i \in 1,...,n} t_i}$ with $t_i$ the computation time of instance $i$.  We use it instead of the arithmetic mean because most of the instances are solved in less than $10$ seconds, and, therefore, the arithmetic mean is greatly influenced by the few instances solved with a time close to the time limit of $900$ seconds.

\begin{table}[!ht]
\centering
\resizebox{\textwidth}{!}{
\begin{tabular}{ll@{\hspace{2em}}rrr@{\hspace{2em}}rrr@{\hspace{2em}}rrr}\hline 
        & Type & \multicolumn{3}{c}{\textbf{SGM}} & \multicolumn{3}{c}{\textbf{direct approximation}} & \multicolumn{3}{c}{\textbf{$2$-level approximation}} \\
        && \% solved & time (s) & iter. & \% solved & time (s) & iter. & \% solved & time (s) & iter. \\ \hline %
\textbf{log234} & Exp. cone & 100 & 1.04 & 15.61 & 100 & 0.59 & 16.29 & 100 & 0.38 & 16.32 \\
\textbf{log567} & Exp. cone & 93.3 & 5.06 & 38.44 & 95.6 & 3.22 & 41.5 & 94.1 & 2.19 & 41.17 \\\hline %
\textbf{root234} & MIQCQP & 100 & 0.21 & 15.80 & 100 & 0.76 & 16.19 & 100 & 0.47 & 16.23 \\
\textbf{root567}& MIQCQP & 95.2 & 1.63 & 39.12 & 93.7 & 3.87 & 39.98 & 94.8 & 2.55 & 40.53 \\\hline
\textbf{nonconvex234} & MINLP & 100 & 12.06 & 15.83 & 100 & 2.36 & 16.23 & 100 & 1.8 & 16.14 \\
\textbf{nonconvex567} & MINLP & 94.4 & 47.54 & 41.38 & 92.6 & 7.68 & 38.64 & 93.7 & 5.68 & 38.62 \\\hline
\end{tabular}
}
\caption{Overview of computational results.}
\label{table_statistics}
\end{table}

\paragraph{Analysis. } %
All of the smaller instances (\ie, $m\le 4$) are solved within the time limit, while $5.9\%$ of the larger instances (\ie, $m \ge 5$) are not solved within it.
Among larger instances, we observe that the number of time limit hits per method is similar, and varies between $4.8\%$ and $7.4\%$.

There is a relative difference in the performance of the methods depending on the nonlinear cost function used.
The instances in log234, log567, nonconvex234 and nonconvex567 are solved more efficiently with the $2$-level approximation procedure, with more than $80\%$ of the instances solved in less time than for the other methods.
Also, the performance ratios of the $2$-level approximation $0.05$ procedure are strictly the best for $\tau$ below $3$ for both log234 and log567 instances. 
However, instances root234 and root567 are solved faster using SGM-MIQCQP with respectively all instances and $78\%$ of them solved in less time than for the other methods. 
Moreover, the performance ratios of SGM-MIQCQP are strictly the best for $\tau$ below $20$ and $10$, respectively. 
Regarding instances nonconvex234 and nonconvex567, the performance ratios of the method $2$-level approximation $0.05$ are the best for $\tau$ below $2$.
Table \ref{table_statistics} shows the average number of iterations in \gls{SGM}, depending on the method and the subset of instances. 
Regarding the number of iterations for the $2$-level approximation procedure in Table \ref{table_statistics}, it shows the number of iterations of the first run of \gls{SGM}, because the second iteration systematically takes less than $5$ iterations. This is explained by the fact that the second iteration starts quite close to an approximate Nash equilibrium thanks to the warm start. The average number of iterations does not depend much on the method, highlighting that the differences in computation times are mostly due to the computation of best responses and normal-form equilibria performed in \gls{SGM}.

To conclude this analysis, the \gls{PWL} approximation-based methods give better results than a direct application of \gls{SGM} on instances with the logarithmic function and the nonconvex function. However, the results are reversed on instances with the inverse square root function. Those results may be explained by observing the initial class of the optimization problem that models the best response : when calculating approximate Nash equilibria in IPGs with nonlinear payoff functions, the more difficult it is to solve the best response problem, the more efficient the methods based on the \gls{PWL} approximation.

\section{Conclusions} \label{section_conclusion}

In this work, we proposed a framework to compute approximate Nash equilibria in \glspl{IPG} with nonlinear payoff functions. We proved that approximating the payoff functions is equivalent to computing approximate equilibria. From a practical standpoint, we introduced two procedures based on \gls{PWL} approximations to compute approximate Nash equilibria. Finally, we proved the efficiency of our method by proposing and solving a cybersecurity investment game, establishing a computational benchmark for our methods. Our method seems best indicated when the type of optimization problems encountered in the best responses is not efficiently solved by state of the art solvers.
We believe there are several directions to extend our work. Among those, we propose three ideas.  
The first idea is the extension of \cref{prop} for different types of pointwise approximation errors, \eg, relative errors \citep{Ngueveu19}. 
Second, we believe the nonlinear functions can be approximated by \gls{PWL} approximations refined only on some well-chosen intervals or in an iterative fashion so that those approximations have less pieces.
Third, in addition to \glspl{IPG}, our methods can employ different algorithms to compute equilibria, \eg, the Cut-and-Play algorithm from~\citep{Carvalho21}. %

\section*{Acknoledgement}
The authors are thankful for the support of the ETI program of Toulouse INP through the project POLYTOPT, and Institut de valorisation des donn\'ees (IVADO) and Fonds de recherche du Qu\'ebec (FRQ) through the FRQ-IVADO	Research Chair and NSERC grant 2019-04557. Gabriele Dragotto is thankful for the support of the \emph{Data X} program from the \emph{Center for Statistics and Machine Learning} at Princeton University.

\section*{Data and Code Availability}
The code, instances and the results of the experiments are available at \url{https://github.com/LICO-labs/SGM-and-PWL}.

\section*{Appendix} \label{appendix}
\subsection*{Implementation Details}
We describe here some technical details on the experimental part, such as values for some parameters of solvers used.

The formulation of the \gls{PWL} functions approximating the convex functions (inverse square root and logarithmic functions) are decided by the use of the Gurobi function \textit{addGenConstrPWL}. It is not the case for the nonconvex function because LinA computes a continuous \gls{PWL} function in the case of the approximation of a convex function and \textit{addGenConstrPWL} can only represent continuous \gls{PWL} functions. Gurobi's \textit{addGenConstrPWL} is used because it is tailored to handle a \gls{PWL} relationship $y = f(x)$. The formulation used for the \gls{PWL} approximation of the nonconvex function is the disaggregated logarithmic convex combination model \citep{Vielma10}.

We set the absolute gap $\delta_{gap}$ allowed for the best response solver and the \gls{MILP} model of the normal-form game to $(1-\mu) \frac{4\delta_f}{5}$. Also, for numerical reasons, a value of $10^{-9}$ is affected to the parameters \textit{IntFeasTol} and \textit{FeasibilityTol} of Gurobi, \textit{mioTolAbsRelaxInt} and \textit{mioTolFeas} of MOSEK and \textit{numerics/feastol} of SCIP. It sets the feasibility tolerance and the tolerance to satisfy integrality constraints for integer variables to a common value for all best response solvers.

To approximate the cybersecurity cost functions with \gls{PWL} functions, we use the Julia package \texttt{LinA} \citep{Codsi21}, an open-source implementation of \citep[Algorithm 5]{Codsi21} available at %
\url{https://github.com/LICO-labs/LinA.jl}. As the inverse square root function and the logarithmic function are convex, this algorithm produces a \gls{PWL} approximation with the minimum number of pieces satisfying the absolute approximation error. Regarding the nonconvex function, the algorithm produces a \gls{PWL} approximation with at most one more piece than the minimum according to \citep[Lemma 4]{Codsi21} and the fact that the nonconvex function is convex and then concave on $[0,1]$. \citep[Algorithm 4]{Codsi21} produces a \gls{PWL} approximation with the minimum number of pieces but it takes a higher computation time. It was not used to approximate the nonconvex function because we estimated it was not worth to spend more time computing this approximation to get at best one piece less. 

The best response models solved with Gurobi are modeled with the gurobipy library, while the best response models solved with MOSEK are modeled with the pyomo library version 6.4. 
The main part of the code concerning the \gls{PWL} approximations is in Julia~1.6, while \gls{SGM} is coded in Python~3.8. Thus there is a little delay each time \gls{SGM} is called to load the Python environment and libraries. 
This Python loading time is removed from the computation time because it could have been removed by an implementation in a single language.

\bibliography{biblio.bib}
\end{document}